\def\softO{\tilde{O}}
\def\Q{\mathbb Q} 
\def\S{\mathfrak{S}}
\newtheorem{thm}{Theorem} 
\newtheorem{ex}{Example}
\def\<#1>{\langle#1\rangle}
\let\set\mathbb
\def\threeFtwo#1#2#3#4#5#6{{_3F_2}\biggl(\begin{matrix}
  {#1}\kern.707em {#2}\kern.707em{#3}\\{#4}\kern1em{#5}
\end{matrix}\,\bigg|\,#6\biggr)}
\def\twoFone#1#2#3#4{{_2F_1}\biggl(\begin{matrix}
  {#1}\kern.707em {#2}\\{#3}
\end{matrix}\,\bigg|\,#4\biggr)}
\def\clap#1{\hbox to0pt{\hss#1\hss}}
\def\stepset#1#2#3#4#5#6#7#8{%
  \begin{picture}(20,20)(-10,-10)
    \put(0,0){\ifx1#1\thicklines\let\x\vector\else\thinlines\let\x\line\fi\x(-1,-1){10}}
    \put(0,0){\ifx1#2\thicklines\let\x\vector\else\thinlines\let\x\line\fi\x(-1,0){10}}
    \put(0,0){\ifx1#3\thicklines\let\x\vector\else\thinlines\let\x\line\fi\x(-1,1){10}}
    \put(0,0){\ifx1#4\thicklines\let\x\vector\else\thinlines\let\x\line\fi\x(0,-1){10}}
    \put(0,0){\ifx1#5\thicklines\let\x\vector\else\thinlines\let\x\line\fi\x(0,1){10}}
    \put(0,0){\ifx1#6\thicklines\let\x\vector\else\thinlines\let\x\line\fi\x(1,-1){10}}
    \put(0,0){\ifx1#7\thicklines\let\x\vector\else\thinlines\let\x\line\fi\x(1,0){10}}
    \put(0,0){\ifx1#8\thicklines\let\x\vector\else\thinlines\let\x\line\fi\x(1,1){10}}
  \end{picture}}
\def\stepset#1#2#3#4#5#6#7#8{%
  \raisebox{-7pt}{%
  \setlength{\unitlength}{.9pt}%
  \begin{picture}(20,20)(-10,-10)
    \put(-5,-5){\clap{$\scriptstyle\ifx1#1\bullet\else\cdot\fi$}}
    \put(0,-5){\clap{$\scriptstyle\ifx1#2\bullet\else\cdot\fi$}}
    \put(5,-5){\clap{$\scriptstyle\ifx1#3\bullet\else\cdot\fi$}}
    \put(-5,0){\clap{$\scriptstyle\ifx1#4\bullet\else\cdot\fi$}}
    \put(5,0){\clap{$\scriptstyle\ifx1#5\bullet\else\cdot\fi$}}
    \put(-5,5){\clap{$\scriptstyle\ifx1#6\bullet\else\cdot\fi$}}
    \put(0,5){\clap{$\scriptstyle\ifx1#7\bullet\else\cdot\fi$}}
    \put(5,5){\clap{$\scriptstyle\ifx1#8\bullet\else\cdot\fi$}}
  \end{picture}}}
\def\Stepset#1#2#3#4#5#6#7#8#9{%
  \raisebox{-9pt}{%
  \setlength{\unitlength}{.9pt}%
  \begin{picture}(20,20)(-10,-10)
    \put(-5,-5){\clap{$\scriptstyle\ifx1#1\bullet\else\cdot\fi$}}
    \put(0,-5){\clap{$\scriptstyle\ifx1#2\bullet\else\cdot\fi$}}
    \put(5,-5){\clap{$\scriptstyle\ifx1#3\bullet\else\cdot\fi$}}
    \put(-5,0){\clap{$\scriptstyle\ifx1#4\bullet\else\cdot\fi$}}
    \put(0,0){\clap{$\scriptstyle\ifx1#5\bullet\else\cdot\fi$}}
    \put(5,0){\clap{$\scriptstyle\ifx1#6\bullet\else\cdot\fi$}}
    \put(-5,5){\clap{$\scriptstyle\ifx1#7\bullet\else\cdot\fi$}}
    \put(0,5){\clap{$\scriptstyle\ifx1#8\bullet\else\cdot\fi$}}
    \put(5,5){\clap{$\scriptstyle\ifx1#9\bullet\else\cdot\fi$}}
  \end{picture}}\StepsetB}
\def\StepsetB#1#2#3#4#5#6#7#8{%
  \raisebox{-8pt}{%
  \setlength{\unitlength}{.9pt}%
  \begin{picture}(20,20)(-10,-10)
    \put(-5,-5){\clap{$\scriptstyle\ifx1#1\bullet\else\cdot\fi$}}
    \put(0,-5){\clap{$\scriptstyle\ifx1#2\bullet\else\cdot\fi$}}
    \put(5,-5){\clap{$\scriptstyle\ifx1#3\bullet\else\cdot\fi$}}
    \put(-5,0){\clap{$\scriptstyle\ifx1#4\bullet\else\cdot\fi$}}
    \put(5,0){\clap{$\scriptstyle\ifx1#5\bullet\else\cdot\fi$}}
    \put(-5,5){\clap{$\scriptstyle\ifx1#6\bullet\else\cdot\fi$}}
    \put(0,5){\clap{$\scriptstyle\ifx1#7\bullet\else\cdot\fi$}}
    \put(5,5){\clap{$\scriptstyle\ifx1#8\bullet\else\cdot\fi$}}
  \end{picture}}\StepsetC}
\def\StepsetC#1#2#3#4#5#6#7#8#9{%
  \raisebox{-7pt}{%
  \setlength{\unitlength}{.9pt}%
  \begin{picture}(20,20)(-10,-10)
    \put(-5,-5){\clap{$\scriptstyle\ifx1#1\bullet\else\cdot\fi$}}
    \put(0,-5){\clap{$\scriptstyle\ifx1#2\bullet\else\cdot\fi$}}
    \put(5,-5){\clap{$\scriptstyle\ifx1#3\bullet\else\cdot\fi$}}
    \put(-5,0){\clap{$\scriptstyle\ifx1#4\bullet\else\cdot\fi$}}
    \put(0,0){\clap{$\scriptstyle\ifx1#5\bullet\else\cdot\fi$}}
    \put(5,0){\clap{$\scriptstyle\ifx1#6\bullet\else\cdot\fi$}}
    \put(-5,5){\clap{$\scriptstyle\ifx1#7\bullet\else\cdot\fi$}}
    \put(0,5){\clap{$\scriptstyle\ifx1#8\bullet\else\cdot\fi$}}
    \put(5,5){\clap{$\scriptstyle\ifx1#9\bullet\else\cdot\fi$}}
  \end{picture}}}
\def\sequenceThreeD#1#2#3#4{%
  #2\quad\hbox to.6\hsize{$#3,\dots$\hfill}\quad\textrm{(#4)}
}
\newcounter{mytable}\setcounter{mytable}{0}\def\mytable{\noindent\refstepcounter{mytable}\textbf{Table~\arabic{mytable}}}
\begin{document}

 \title{Automatic Classification of Restricted Lattice Walks}

 \author{Alin Bostan\addressmark{1}\thanks{Partially supported by the 
French National Agency for Research 
(ANR Project ``Gecko'') and the Microsoft Research-INRIA  
Joint Centre.} \and Manuel Kauers\addressmark{2}\thanks{Partially supported by the Austrian Science Foundation (FWF) grants P19462-N18 and P20162-N18.}}

\address{\addressmark{1} Algorithms Project, INRIA Paris-Rocquencourt, 78153 Le  
Chesnay, France\\
\addressmark{2} Research Institute for Symbolic Computation, J. Kepler  
University Linz, Austria}
\keywords{Automated guessing, lattice paths, generating functions, computer algebra, enumeration}

\maketitle

 \begin{abstract}  
We propose an \emph{experimental mathematics approach} 
leading to the computer-driven \emph{discovery} of various conjectures about 
structural properties of generating functions coming from enumeration of restricted lattice walks in 2D and in 3D. 
 \end{abstract}
 \section{Introduction}



 There is a strange phenomenon about the generating functions that count lattice walks restricted to the 
 quarter plane: depending on the choice of the set $\mathfrak{S}\subseteq\{{\swarrow,}\ {\leftarrow,}\ {\nwarrow,}\ 
 {\uparrow,}\ {\nearrow,}\ {\rightarrow,}\ {\searrow,}\ {\downarrow}\}$ of admissible steps, the generating
 function is sometimes rational, sometimes algebraic [but not rational], sometimes D-finite [but not algebraic], and sometimes not even D-finite. 
 This is quite in contrast to the corresponding problem in~1D, where the generating functions invariably
 are algebraic~\cite{BaFl02}.
 Much progress was made recently on understanding why this is so, and only very recently,
 Bousquet-M\'elou and Mishna~\cite{BoMi08} have announced a classification of all the 256 possible
 step sets into algebraic, transcendental D-finite, and non-D-finite cases, together with proofs for the 
 algebraic and D-finite cases and strong evidence supporting the conjectured non-D-finiteness of the others. 

 As usual, a power series $S(t) \in \Q[[t]]$ is called \emph{algebraic} if there exists a bivariate 
 polynomial~$P(T,t)$ in $\Q[T,t]$ such that $P(S(t),t)=0$,
 and transcendental otherwise. Also as usual, a power series $S(t)$ is called D-finite if it satisfies a linear
 differential equation with polynomial coefficients. (Every algebraic power series is D-finite, but not vice versa.)
 At first glance, it might seem easy to prove that a power series is algebraic or D-finite: just come up with an
 appropriate equation, and then verify that the series satisfies this equation. But as far as lattice walks are concerned, 
 most proofs given so far are indirect in that they avoid exhibiting the equation explicitly 
 but merely are satisfied showing its existence.
 This is probably so because the equations appearing in this context are often too big to be dealt with by hand. 

 Nevertheless, it is interesting to know the equations explicitly, because they provide a standard canonical 
 representation for a series, from which lots of further information can be extracted in a straightforward manner. 
 By applying a well-known technique from computer algebra (in modern fashion, cf.~Section~\ref{sec:2}), we have 
 systematically searched for differential equations and algebraic equations that the series counting the 
 walks in the quarter plane satisfy. These are given in Section~\ref{sec:3}. 
 We have also made a first step towards classifying walks in $\set Z^3$ confined to the first octant (cf.~Section~\ref{sec:4}) 
 by considering all step sets $\mathfrak{S}$ with up to five elements, and performed a systematic search for equations of the 
 corresponding series.  More than 2000 hours of computation time have been spent in order to analyze about 3500 different sequences. 

 We do not provide proofs that the equations we found are indeed correct, but the computational evidence in favor of
 our equations is striking. We have no doubt that all the equations we found are correct. 
 In principle, it would be possible to supplement the ``automatically guessed'' equations by computer proofs in 
 a systematic fashion, using techniques that have recently been applied to some special cases~\cite{KaZe08,KaKoZe08,BoKa08}.
 But we found that the computational cost for performing these 
automated proofs
would be 
by far
higher than what was needed for the mere discovery.

\section{Methodology} \label{sec:2}   
To study generating functions for lattice walks, we
follow a classical scheme in experimental mathematics. It is based on the following steps: 
(S1)~computation of high order expansions of generating power series; 
(S2)~guessing differential and/or algebraic equations satisfied by those power series; 
(S3)~empirical certification of the guessed equations (sieving by inspection of their
analytic, algebraic and arithmetic properties); (S4) rigorous proof, based on (exact) polynomial computations.

In what follows, we only explain Steps~(S1), (S2) and (S3). A full description of Step (S4) is given in~\cite{BoKa08}. By way of illustration, we choose an example requiring computations with
human-sized outputs, namely the classical case, initially considered by
Kreweras~\cite{Kreweras65,Bousquet02,Bousquet05}, of walks in the quarter
plane restricted to the step set $\mathfrak{S} =\{{\leftarrow,}\ {\nearrow,}\
{\downarrow}\}$. 

\subsection{Basic Definitions and Facts}
We focus on 2D and 3D lattice walks. The 2D walks that we consider are confined to the quarter plane~$\mathbb{N}^2$, they join the origin of $\mathbb{N}^2$ to an arbitrary point $(i,j) \in \mathbb{N}^2$, and are restricted to 
a fixed subset $\mathfrak{S}$ of the step set $\{{\swarrow,}\ {\leftarrow,}\ {\nwarrow,}\  {\uparrow,}\ {\nearrow,}\ {\rightarrow,}\ {\searrow,}\ {\downarrow}\}$.  
If $f(n;i,j)$ denotes the number of such walks of length~$n$
 (i.e., using $n$ steps chosen from~$\mathfrak{S}$), the sequence $f(n;i,j)$ satisfies the multivariate recurrence with constant coefficients
\begin{equation}    \label{eq:mrec}      
f(n+1;i,j) = \sum_{(h,k) \in \mathfrak{S}} f(n;i-h,j-k)  \quad \textrm{for} \quad n,i,j\geq0.
\end{equation} 
Together with the appropriate boundary conditions
$$f(0;0,0)=1 \quad \textrm{and}\quad f(n;i,j)=0 \quad \text{if $i<0$ or $j<0$ or $n<0$},$$
the recurrence relation~\eqref{eq:mrec} uniquely determines the sequence $f(n;i,j)$. As is customary in combinatorics, we let 
 \[
  F(t;x,y)=\sum_{n \geq 0}\Bigl(\sum_{i,j \geq 0} f(n;i,j)x^i y^j\Bigr)t^n
 \]
be the trivariate generating power series 
of the sequence $f(n;i,j)$. As $f(n;i,j)=0$ as soon as $i>n$ or $j>n$, the inner sum is actually finite, and so we may regard $F(t;x,y)$ as a formal power series in $t$ with polynomial coefficients in $\mathbb{Q}[x,y]$.

Specializing $F(t;x,y)$ to selected values of $x$ and $y$ leads to
various combinatorial interpretations. 
 Setting $x=y=1$ yields the power series $F(t;1,1)$ whose coefficients count the total number of walks with prescribed number of steps 
(and arbitrary endpoint);
 the choice $x=y=0$ gives the series $F(t;0,0)$ whose coefficients count the number of walks returning to the origin;
 setting $x=1$, $y=0$ yields the power series 
whose coefficients count the number of walks ending somewhere on the horizontal axis, etc.

By~\cite[Th.~7]{BoPe00}, multivariate sequences that satisfy recurrences with constant coefficients have moderate growth, and thus their generating series 
are analytic at the origin. The next theorem refines this result 
in our context.    

\begin{thm} \label{theo:Radius}
The following inequality holds
\begin{equation}\label{eq:bound}
f(n;i,j) \leq |\S|^n \quad \text{for all} \quad (i,j,n) \in \set N^3.
\end{equation}
In particular, the power series $F(t;0,0), F(t;1,0), F(t;0,1)$ and $F(t;1,1)$
are convergent in $\mathbb{C}[[t]]$ at $t=0$ and their radius of convergence is at least $1/|\S|$.
\end{thm}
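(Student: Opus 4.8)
The plan is to establish the pointwise bound \eqref{eq:bound} by a short induction on the walk length $n$, and then to read off the statement about radii of convergence by the root test.

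First I would prove \eqref{eq:bound} by induction on $n$. For $n=0$ the boundary conditions force $f(0;i,j)\in\{0,1\}$, so $f(0;i,j)\le 1=|\mathfrak S|^0$. For the inductive step, assume $f(n;i,j)\le|\mathfrak S|^n$ for all $(i,j)\in\set N^2$, and recall that $f(n;i,j)=0$ whenever $i<0$ or $j<0$. Then the recurrence~\eqref{eq:mrec} writes $f(n+1;i,j)$ as a sum of exactly $|\mathfrak S|$ terms, each of which is either $0$ or at most $|\mathfrak S|^n$ by the induction hypothesis; hence $f(n+1;i,j)\le|\mathfrak S|\cdot|\mathfrak S|^n=|\mathfrak S|^{n+1}$. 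Running the same computation on the sum $\sum_{i,j\ge0}f(n;i,j)$ (equivalently: every walk of length $n$ is one of the $|\mathfrak S|^n$ sequences of $n$ steps drawn from $\mathfrak S$) shows in addition that the \emph{total} number of walks of length $n$ is at most $|\mathfrak S|^n$.

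Next I would specialize. The series $F(t;0,0)=\sum_n f(n;0,0)\,t^n$ has $n$-th coefficient bounded by $|\mathfrak S|^n$ directly from \eqref{eq:bound}. For $F(t;1,0)$, $F(t;0,1)$ and $F(t;1,1)$ the $n$-th coefficient is the finite sum of the values $f(n;i,j)$ over the relevant endpoints with $i,j\le n$ — this finiteness is what makes the specialization legitimate — and that sum is at most the total number $|\mathfrak S|^n$ of length-$n$ walks. In every case the $n$-th coefficient $c_n$ satisfies $0\le c_n\le|\mathfrak S|^n$, so $\limsup_{n\to\infty}c_n^{1/n}\le|\mathfrak S|$, and by Cauchy--Hadamard the radius of convergence is at least $1/|\mathfrak S|$.

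There is essentially no obstacle in this argument; the only point worth stating carefully is the equivalence between the recursive description of $f(n;i,j)$ and its combinatorial reading as counting (disjoint families of) length-$n$ step sequences, since that equivalence is what makes both the induction step and the summation over endpoints transparent.
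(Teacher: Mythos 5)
Your proof is correct, but it establishes the key bound by a different mechanism than the paper. You prove $f(n;i,j)\le|\mathfrak S|^n$ by induction on $n$ directly from the recurrence~\eqref{eq:mrec} and the boundary conditions, whereas the paper's proof is a one-line combinatorial injection: every restricted walk of length $n$ is in particular one of the $|\mathfrak S|^n$ unrestricted $n$-step walks from the origin, so the restricted count is bounded by $|\mathfrak S|^n$; the further specializations $F(t;0,0)$, $F(t;1,0)$, $F(t;0,1)$ count walks subject to additional restrictions and hence obey the same bound. Your inductive route is slightly longer but has the virtue of working purely formally from the defining recurrence, without invoking the combinatorial interpretation of $f(n;i,j)$ at all (you only bring that in as an equivalent reading); this would matter if one took the recurrence as the definition. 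The paper's argument is shorter and makes the uniformity over all four specializations immediate, since ``subset of unrestricted walks'' covers every case at once. Your explicit appeal to Cauchy--Hadamard for the radius-of-convergence conclusion is a detail the paper leaves implicit; both are fine.
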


\begin{proof}  
 The total number of unrestricted $n$-step walks starting from the origin is~$|\S|^n$, so
 the number of walks restricted to a certain region is bounded by this quantity. 
 This implies that the coefficient of $t^n$ in $F(t;1,1)$ is at most $|\S|^n$. The bound also applies to the coefficient of $t^n$ in $F(t;\alpha,\beta)$
for $\alpha,\beta\in\{0,1\}$, as these series count walks which are subject to further restrictions.
\end{proof}

\subsubsection{D-finite generating series of walks are $G$-functions}
A power series $S(t)=\sum_{n \geq 0} a_n t^n$ in $\mathbb{Q}[[t]]$ is called a $G$-function\footnote{The usual definition is more general, the coefficients of $S$ can be taken in an arbitrary algebraic number field. For our purposes it is sufficient and convenient to restrict to rational coefficients.} if (a)~it is D-finite; (b)~its radius of convergence in $\mathbb{C}[[t]]$ is positive; 
(c)~there exists a constant $C>0$ such that for all $n \in \mathbb{N}$, the common denominator of $a_0,\ldots,a_n$ is bounded by $C^n$. 

Examples of $G$-functions are the power series expansions at the origin of  $\log(1-t)$ and $(1-t)^\alpha$ for $\alpha \in \mathbb{Q}$. More generally, the Gauss hypergeometric series $_2 F_1(\alpha,\beta,\gamma ;t)$ with rational parameters $\alpha,\beta,\gamma$, is also a $G$-series~\cite{DwGeSu94}. A celebrated theorem of Eisenstein assures that any algebraic power series must be a $G$-function (if $S$ is algebraic, there exists an integer $C \in \mathbb{N}$ such that $a_n C^{n+1}$ is an integer for all $n$.)  The fact that $G$-functions arise frequently in combinatorics was recently pointed out by Garoufalidis~\cite{Garoufalidis08}.   

$G$-functions enjoy many remarkable properties. Chudnovsky~\cite{ChCh85}  
proved that the minimal order differential equation satisfied by a $G$-series must be \emph{globally nilpotent} (see Section~\ref{ssec:pcurv} below for the definition and an algorithmic use of this notion).
By a theorem of Katz and Honda~\cite{Katz70,Honda81}, the global
nilpotence of a differential operator implies that all of its singular points
are \emph{regular singular\/} points with \emph{rational exponents}.
See also~\cite{Andre89,ChambertLoir00,DwGeSu94} for more details on this topic.

\begin{thm}  \label{theo:Gseries}
Let $S(t)$ be one of the power series $F(t;0,0), F(t;1,0), F(t;0,1)$ and $F(t;1,1)$. If $S$ is D-finite, then $S$ is a $G$-series. 
In particular, its minimal order homogeneous linear differential equation is Fuchsian and it has only rational exponents. 
Moreover, the coefficient sequence of $S(t)$ is asymptotically equivalent to a sum of terms of the form $\kappa \rho^n n^\alpha (\log n)^\beta$ for some 
constants $\kappa\in\set R$, $\alpha\in\set Q$, $\rho\in\overline{\set Q}$, and $\beta\in\set N$.
\end{thm}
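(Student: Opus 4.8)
The plan is to check the three defining conditions of a $G$-function for $S$ directly, then push the resulting differential operator through the global-nilpotence machinery recalled above, and finally extract the coefficient asymptotics by singularity analysis. For the first part, condition~(a) (D-finiteness) is the hypothesis, and condition~(b) (positivity of the radius of convergence) is precisely Theorem~\ref{theo:Radius}, which gives radius at least $1/|\S|>0$ for each of $F(t;0,0)$, $F(t;1,0)$, $F(t;0,1)$, $F(t;1,1)$. Condition~(c) is in fact trivial here: writing $S(t)=\sum_n a_n t^n$, each $a_n$ is a finite sum of the walk counts $f(n;i,j)\in\set N$ (over $i=j=0$, over $j=0$, over $i=0$, or over all $i,j\ge 0$, respectively), hence a \emph{non-negative integer}, so $a_0,\dots,a_n$ have common denominator $1\le C^n$. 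Thus $S$ is a $G$-series.

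Next, let $L$ be the minimal-order linear differential operator with coefficients in $\Q(t)$ annihilating $S$. By Chudnovsky's theorem~\cite{ChCh85}, $L$ is globally nilpotent; by the theorem of Katz and Honda~\cite{Katz70,Honda81}, a globally nilpotent operator has only regular singular points --- at every point of $\set C$ and at $\infty$ --- with rational local exponents. In other words $L$ is Fuchsian with rational exponents, which is the second assertion of the theorem.

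For the asymptotics I would use the classical singularity-analysis toolkit, after discarding the trivial case where $S$ is a polynomial. Being D-finite, $S$ is holomorphic on $\set C$ minus the finitely many roots of the leading coefficient of $L$, all of which are algebraic numbers, and it continues analytically to a slit plane, hence to a $\Delta$-domain around each of its singularities. Since $a_n\ge 0$, Pringsheim's theorem places a singularity of $S$ at $t=R$, where $R<\infty$ is the radius of convergence; let $\zeta_1,\dots,\zeta_m$ be the (finitely many, algebraic) singularities of $S$ on the circle $|t|=R$ and set $\rho_k=1/\zeta_k\in\overline{\set Q}$. Because $L$ is Fuchsian with rational exponents, a local basis of solutions at each $\zeta_k$ is built from finitely many functions $(1-t/\zeta_k)^{\alpha}\bigl(\log(1-t/\zeta_k)\bigr)^{j}$ with $\alpha\in\set Q$, $j\in\set N$, each multiplied by a function analytic and nonvanishing at $\zeta_k$; solving the connection problem expresses $S$ near $\zeta_k$ as such a combination, and the transfer theorems of singularity analysis turn each summand into a contribution $\kappa\,\rho_k^{n}\,n^{\alpha}(\log n)^{\beta}$ to $a_n$ with $\alpha\in\set Q$, $\beta\in\set N$. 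Summing over the dominant singularities (and pairing complex-conjugate contributions, so that the constants may be taken real) yields the claimed asymptotic equivalence.

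The assembly itself is routine; the genuine content resides in the two cited theorems. Within the proof, the step I expect to demand the most care is the last one: one must justify the continuation of $S$ to a suitable $\Delta$-domain, treat all dominant singularities simultaneously, and deal with the non-semisimple (logarithmic) part of the local monodromy before the transfer theorem can be applied term by term.
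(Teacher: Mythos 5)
Your argument is correct and follows essentially the same route as the paper: conditions (a) and (c) of the $G$-function definition are immediate (the coefficients being non-negative integers), condition (b) is Theorem~\ref{theo:Radius}, and Fuchsianity with rational exponents comes from combining Chudnovsky's global-nilpotence theorem with the Katz--Honda theorem. The only difference is in the final asymptotic claim, where the paper simply invokes \cite[Prop.~2.5]{Garoufalidis08} while you unfold the underlying singularity-analysis argument (Pringsheim, local bases at regular singular points, transfer theorems); your sketch is the standard proof of that cited proposition and the cautions you flag (continuation to a $\Delta$-domain, several dominant singularities, logarithmic monodromy) are exactly the points that reference handles.
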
                  

\begin{proof}
     The conditions (a) and (c) in the definition of a $G$-function are clearly satisfied. The only non-trivial point is the fact that the series $S$ has a positive radius of convergence in $\mathbb{C}$. This follows from Theorem~\ref{theo:Radius}.
The Fuchsianity of the minimal equation for $S$, and the rationality of its exponents, follow
by combining the results by Katz, Honda and Chudnovsky cited above.
The claim on the asymptotics of the coefficients of $S(t)$ is a consequence of~\cite[Prop. 2.5]{Garoufalidis08}.
\end{proof}
  
For 3D 
walks, the definitions are analogous. The trivariate power series $F(t;x,y)$ is simply replaced by the generating series $G(t;x,y,z)\in\Q[x,y,z][[t]]$ of the sequence $g(n;i,j,k)$ that counts walks in $\set N^3$ starting at $(0,0,0)$ and ending at $(i,j,k) \in \set N^3$. Note that the appropriate versions of Theorems~\ref{theo:Radius} and \ref{theo:Gseries} hold; in particular, the generating series of octant walks $G(t;1,1,1)$ is 
a $G$-series whenever it is D-finite. 
                               
\subsection{Computing large series expansions}\label{sec:series_exp}   
The recurrence~\eqref{eq:mrec} can be used to determine the value of $f(n;i,j)$ for specific integers $n,i,j \in \set N$. Theorem~\ref{theo:Radius} implies that $f(n;i,j)$ is a non-negative integer whose bit size is at most $O(n)$. If $N \in  \set N$, the values $f(n;i,j)$ for $0\leq n,i,j \leq N$ can thus be computed altogether by a straightforward algorithm that uses $O(N^3)$ arithmetic operations and $\softO(N^4)$ bit operations. (We assume that two integers of bit-size $N$ can be multiplied in $\softO(N)$ bit operations; here, the soft-O notation $\softO(\;)$ hides logarithmic factors.) The memory storage requirement is proportional to~$N^3$.
The same is also true for the truncated power series $F_N = F(t;x,y) \bmod t^N$. 
For our experiments in~2D, we have chosen $N=1000$. With this choice, the computation of the $f(n;i,j)$ is the step which consumes by far the most computation time in our calculations.%
\footnote{We have carried out our computations on various different machines whose main memory ranges from 8\,Gb to 32\,Gb and which are 
 equipped with (multiple) processors all running at about~3GHz.}

\begin{ex}
The Kreweras walks satisfy the recurrence
\begin{equation*}       \label{eq:recKrew} 
 f(n+1,i,j)=f(n,i+1,j)+f(n,i,j+1)+f(n,i-1,j-1)\quad \textrm{for} \quad n,i,j\geq0,
\end{equation*}
which allows the computation of the first terms of the series $F(t;x,y)$
\begin{alignat*}1
  F(t;x,y)&= 1 + x y t + (x^2 y^2+y+x) t^2 + (x^3 y^3+2 x y^2+2 x^2 y+2) t^3 \\
  &\quad{} + (x^4 y^4+3 x^2 y^3+3 x^3 y^2+2 y^2+6 x y+2 x^2) t^4 \\
  &\quad{} + (x^5 y^5+4 x^3 y^4+4 x^4 y^3+5 x y^3+12 x^2 y^2+5 x^3 y+8 y+8 x)
  t^5 + \cdots
 \end{alignat*}
and also the first terms of the generating series $F(t;1,1)$ for the total number of Kreweras walks 
\begin{alignat*}1 
	F(t;1,1)={}& 
1+t+3t^2+7t^3+17t^4+47t^5+125t^6+333t^7+939t^8+2597t^9+\\
& 7183t^{10}+20505t^{11}+57859t^{12}+163201t^{13}+469795t^{14}+\cdots
 \end{alignat*} 
\end{ex}

In the 3D case,  the values $g(n;i,j,k)$ for $0\leq n,i,j,k \leq N$ can be computed in $O(N^4)$ arithmetic operations, $\softO(N^5)$ bit operations and 
$O(N^4)$ memory space. In practice, we found that computing $G \bmod t^N$ with $N=400$ is feasible.

 \subsection{Guessing}\label{sec:2.2}

Once the first terms of a power series are determined, our approach is to search systematically for candidates of linear differential equations or of algebraic equations which the series may possibly satisfy. This technique is classical in computer algebra and mathematical physics, see for example~\cite{BrGu90,SaZi94,mallinger96}. Differential and algebraic guessing procedures are 
available in some computer algebra systems like Maple and Mathematica.
  
\subsubsection{Differential guessing}\label{sec:deguess}
If the first $N$ terms of a power series $S \in \Q[[t]]$ are available, one can search for a differential equation satisfied by $S$ at precision $N$, that is, for an element $\mathcal{L}$ in the Weyl algebra $\Q[t]\<D_t>$ of  differential operators in the derivation $D_t = \frac{d}{dt}$ with polynomial coefficients in~$t$, such that          
\begin{equation} \label{eq:diffeq}
	\mathcal{L}(S) = c_r(t) S^{(r)}(t) + \cdots + c_1(t)S'(t) + c_0(t) S(t) = 0 \bmod t^N.
\end{equation}                                                       
Here, the coefficients $c_0(t),\ldots, c_r(t)\in \Q[t]$ are not simultaneously zero, and their degrees are bounded by a prescribed integer~$d\geq 0$.
By a simple linear algebra argument, if $d$ and $r$
are chosen such that $(d+1)(r+1) > N$, then such a differential equation always
exists. 
On the other side, if $d,r$ and $N$ are such that $(d+1)(r+1) \ll N$, the equation~\eqref{eq:diffeq} translates into a highly over-determined linear system, so it has no reason to possess a non-trivial solution.

The idea is that if the given power series $S(t)$ happens to be D-finite, then for a sufficiently large~$N$, a differential equation of type~\eqref{eq:diffeq} (thus satisfied a priori only at precision~$N$) will provide a differential equation which is really satisfied by $S(t)$ in $\Q[[t]]$ (i.e., at precision infinity).
In other words, the D-finiteness of a power series can be (conjecturally) recognized using a finite amount of information.

Given the values $d,r,N$, and the first $N$ terms of the series~$S$, a
candidate differential equation of type~\eqref{eq:diffeq} for $S$ can be computed by Gaussian elimination in $O(N^3)$ arithmetic operations and $\softO(N^4)$ bit operations. Actually, a modular approach is preferred to a direct Gaussian elimination over~$\Q$. Precisely, the linear algebra step is performed modulo several primes~$p$, and the results (differential operators modulo~$p$) are recombined over $\Q$ via rational reconstruction based on an effective version of the Chinese remainder theorem. (See~\cite{kauers09a} for an implementation of this technique in Mathematica.)

If no differential equation is found, this definitely rules out the possibility that a differential equation of order~$r$ and degree~$d$ exists.
This does not, however, imply that the series at hand is not D-finite. It may still be that the series satisfies a differential equation of order higher
than~$r$ or an equation with polynomial coefficients of degree exceeding~$d$.

Asymptotically more efficient guessing algorithms exist, based on fast Hermite-Pad\'e approximation~\cite{BeLa94} of the vector of (truncated) power series $[S,S', \ldots, S^{(r)}]$; they have arithmetic complexity quadratic or even softly-linear in $N$. Such sophisticated algorithms were not needed to obtain the results of this paper, but they have provided crucial help in the treatment of examples of critical sizes (e.g. guessing with higher values of $d,r,N$ and/or over a parametric base field like $\Q(x)$ instead of~$\Q$) needed for the proof 
in~\cite{BoKa08}.

\begin{ex} [continued]
 $N=100$ terms of the generating series  $F(t;1,1)$ of the total number of Kreweras walks  are sufficient to conjecture that $F(t;1,1)$ is D-finite,
 since it verifies the differential equation $\mathcal{L}_{1,1}(F(t;1,1)) = 0 \bmod t^N$, where
\begin{alignat}1    
\mathcal{L}_{1,1}&=  4 t^2 (t+1) (3 t-4) (3 t-1)^3 (9 t^2+3 t+1) D_t^4\notag\\
&\quad{} + 2 t (3 t-1)^2 (2916 t^5-1296 t^4-3564 t^3-477 t^2-93 t+52)D_t^3\notag\\
&\quad{} + 3 (3 t-1) (29808 t^6-26244 t^5-28440 t^4+2754 t^3+431 t^2+448 t-40) D_t^2\label{eq:diffeqKrew}\\
&\quad{} +6 (68040 t^6-88452 t^5-37206 t^4+16758 t^3+954 t^2+253 t-126) D_t\notag\\
&\quad{} +18 (6480 t^5-8856 t^4-3078 t^3+714 t^2+211 t+2).\notag
\end{alignat}  
Thus, with high probability, $F(t;1,1)$ verifies the differential equation $\mathcal{L}_{1,1}(F(t;1,1)) = 0$. 
\end{ex} 

Sometimes (see Section~\ref{ssec:pcurv}) one needs to guess the minimal-order differential equation $\mathcal{L}_{\min}(S) = 0$ satisfied by the given generating power series. Most of the time, the choice $(d,r)$ of the target degree and order does not lead to this minimal operator. 
Worse, it may even happen that the number of initial terms $N$ is not large enough to 
allow the recovery of~$\mathcal{L}_{\min}$, while these $N$ terms suffice to guess non-minimal order operators. (The explanation of why such a situation occurs systematically was given 
in~\cite{BoChLeSaSc07}, for the case of differential equations satisfied by algebraic functions.)  A good heuristic is to compute several non-minimal operators and to take their greatest common right divisor; generically, the result is exactly $\mathcal{L}_{\min}$.

As a final general remark, let us point out that a power series satisfies a linear differential equation if and only if its coefficients satisfy a linear
recurrence equation with polynomial coefficients. A recurrence equation can be computed either from a differential equation, or it can be guessed from
scratch by proceeding analogously as described above for differential equations.

\begin{ex}[continued] 
$N=100$ terms of the series $S(t) = F(t;1,1)$ suffice to guess that its coefficients satisfy the 
order-6 recurrence 
\begin{alignat*}1
& 2 (n+6) (n+7) (2 n+13) (7 n+34) u_{n+6}  
- (n+6) (140 n^3+2402 n^2+13687 n+25843) u_{n+5} \\
& + 3 (28 n^4+626 n^3+5123 n^2+18281 n+24070) u_{n+4}\\
& - 18 (n+4) (28 n^3+311 n^2+897 n+304) u_{n+3}  
+ 108 (n+3)(35 n^3+443 n^2+1787 n+2309) u_{n+2} \\
& - 324 (n+2)(7 n^3+90 n^2+382 n+545) u_{n+1} 
- 972 (n+1) (n+2) (n+4) (7 n+41) u_n = 0.
\end{alignat*}
\end{ex}

                                     
\subsubsection{Algebraic guessing} \label{sec:aeguess}
If the first $N$ terms of a power series $S \in \Q[[t]]$ are available, one can also search for an algebraic equation satisfied by $S$ at precision~$N$, that is, for a bivariate polynomial $P(T,t)$ in $\Q[T,t]$ such that         
\begin{equation} \label{eq:algeq}
	P(S(t),t) = c_r(t) S(t)^{r} +\cdots + c_1(t) S(t) + c_0(t) = 0 \bmod t^N. 
\end{equation} 
A similar discussion shows that candidate algebraic equations of type~\eqref{eq:algeq} for $S$ can be ``guessed'' by performing either Gaussian elimination or Hermite-Pad\'e approximation on the vector $[1,S, \ldots, S^{r}]$, followed by a gcd computation in $\Q[T,t]$ applied to two (or more) different guesses.

\begin{ex}[continued] 
$N=100$ terms of the series $S(t) = F(t;1,1)$ counting the total number of Kreweras walks suffice to guess that $F(t;1,1)$ is very probably algebraic, namely solution of the bivariate polynomial   
\begin{alignat}1 
P_{1,1}(T,t)={}&t^5 (3 t-1)^3 T^6+6 t^4 (3 t-1)^3 T^5+t^3 (3 t-1) (135 t^2-78 t+14) T^4\notag\\
   &\qquad{}+4 t^2 (3 t-1) (45 t^2-18 t+4) T^3+t (3 t-1) (135 t^2-26 t+9) T^2\label{eq:algeqKrew}\\
   &\qquad{}+2 (3 t-1) (27 t^2-2 t+1) T+43 t^2+t+2.\notag
\end{alignat}
\end{ex}

\subsection{Empirical certification of guesses}\label{sec:1bis} 
Once discovered a differential equation~\eqref{eq:diffeq} or an algebraic equation~\eqref{eq:algeq} that the power series $S(t)$ seems to satisfy, we inspect several properties of these equations, in order to provide more convincing evidence that they are correct.   These properties have various natures: some are computational features (moderate bit sizes), others are algebraic, analytic and even arithmetic properties. We check them systematically on all the candidates; if they are verified, as in the Kreweras example, this offers striking evidence that the guessed equations are not artefacts.

\subsubsection{Size sieve: Reasonable bit size} The differential equation~\eqref{eq:diffeq} 
has typically much lower bit size than a differential equation produced by the same guessing procedure applied to the same order, degree and precision, but to an arbitrary series having coefficients of bit-size comparable to that of $S(t)$. A similar observation holds for the algebraic equation~\eqref{eq:algeq}.

\begin{ex}[continued]
If we perturb the coefficients of $S(t)=F(t;1,1)$ by just adding 
a random integer between $-100$ and $100$
to each of its coefficients, then the differential guessing procedures at order $r=4$, degree $d=9$ and precision $N=100$ will either give no result (the over-determined system approach) or produce fake candidates (the Hermite-Pad\'e approach) with polynomial coefficients in $t$, whose coefficients in~$\mathbb{Q}$ have numerators and denominators of about 500 decimal digits each, instead of 4 digits for $\mathcal{L}_{1,1}$.
\end{ex}


\subsubsection{Algebraic sieve: High order series matching}
The equations~\eqref{eq:diffeq} and~\eqref{eq:algeq} were obtained starting from $N$ coefficients of the power series $S(t)$. They are therefore satisfied a priori only modulo $t^N$. We compute more terms of $S(t)$, say $2N$, and check whether the same equations still hold modulo $t^{2N}$. If this is the case, chances increase that the guessed equations also hold at infinite precision.

\subsubsection{Analytic sieve: Singularity analysis}
By Theorem~\ref{theo:Gseries}, the minimal order operators for power series like $S(t)=F(t;0,0)$ and $S(t)=F(t;1,1)$ must have only regular singularities (including the point at infinity) and their exponents must be rational numbers.

\begin{ex}[continued]
The differential operator $\mathcal{L}_{1,1}$ is Fuchsian. 
Indeed, a (fully automated) local singularity analysis shows that the set 
of its singular points 
$\left \{-1, 0, \infty, \tfrac13, \tfrac43, -\tfrac16(1 \pm i\sqrt3) \right\}$
is formed solely of regular singularities. Moreover, the indicial polynomials  
of $\mathcal{L}_{1,1}$ 
are, respectively:
$t(t-1)(t-2)(2t-1), 
t(t-1)(2t+1)(t+1), 
(t-5)(t-1)(t-2)(t-4), 
(t+1)t(4t-1)(4t+1), 
t(t-1)(t-2)(t-4)$,       
and $t(t-2)(2t-3)(t-1)$. Their roots are the rational exponents of the singularities.

\end{ex}
\subsubsection{Arithmetic sieve: $G$-series and global nilpotence}   \label{ssec:pcurv}
Last, but not least, we check an arithmetic property of the guessed differential equations by exploiting the fact that those expected to arise in our combinatorial context are very special.          
                      
Indeed, by a theorem due to the Chudnovsky brothers~\cite{ChCh85}, the minimal order differential operator $\mathcal{L} \in \Q[t]\langle D_t \rangle$ killing a $G$-series enjoys a remarkable arithmetic property: $\mathcal{L}$ is \emph{globally nilpotent}. By definition, this means that for almost every prime number~$p$ (i.e., for all with finitely many exceptions), there exists an integer $\mu \geq 1$ such that the remainder of the Euclidean (right) division of $D_t^{p \mu}$ by $\mathcal{L}$ is congruent to zero modulo~$p$~\cite{Honda81,Dwork90}.
              
 {}From a computational view-point, a fine feature is that the  
nilpotence modulo~$p$ is checkable.
If $r$ denotes the order of $\mathcal{L}$, let $M_p$ be the \emph{$p$-curvature matrix of $\mathcal{L}$}, defined as the $r \times r$  
matrix with entries in $\Q(t)$ whose $(i,j)$ entry is the  
coefficient of $D_t^{j-1}$ in the remainder of the Euclidean (right)  
division of $D_t^{p+i-1}$ by $\mathcal{L}$. Then, $\mathcal{L}$ is  
nilpotent modulo~$p$ if and only if
the matrix~$M_p$ is nilpotent modulo~$p$~\cite{Dwork90,Schmitt93}.

In combination with Theorem~\ref{theo:Gseries}, this yields a fast
algorithmic filter: as soon as we guess a candidate differential
equation satisfied by a generating series which is suspected to be a $G$-series (e.g. by $F(t;1,1)$), we check
whether its $p$-curvature is nilpotent, say modulo the first 50 primes
for which the reduced operator $\mathcal{L} \bmod p$ is
well-defined. If the $p$-curvature matrix of $\mathcal{L}$ is
nilpotent modulo $p$ for all those primes $p$, then the guessed
equation is, with very high probability, the correct one.
                               
We push even further this arithmetic sieving. A famous conjecture,
attributed to Grothendieck, asserts that the differential equation
$\mathcal{L}(S)=0$ possesses a basis of \emph{algebraic solutions\/}
(over $\mathbb{Q}(x)$) if and only if its $p$-curvature matrix $M_p$ \emph{is zero modulo~$p$\/} for almost all primes $p$. Even if the conjecture is, for the
moment, fully proved only for order one operators and partially in the
other cases~\cite{ChambertLoir00}, we freely use it as an oracle to
detect whether a guessed differential equation has a basis of
algebraic solutions.  For instance, the computation of the
$p$-curvature of an order 11 differential operator with polynomial
coefficients of degree 96 in $t$, was one of the key points in our
discovery~\cite{BoKa08} that the trivariate generating
function for Gessel walks is algebraic.

\begin{ex}[continued]
The $5$-curvature matrix $M_5(t)$ of the  differential operator~$\mathcal{L}_{1,1}$ in~\eqref{eq:diffeqKrew} has the form $\frac{1}{d(t)}{\tilde{M_5}(t)}$, where $d(t)= 
(3t - 1)^7 t^6 (t+1)^5 (9t^2 + 3 t + 1)^5 (3t - 4)$ and ${\tilde{M_5}(t)}$ is a $4\times 4$ 
matrix with polynomial entries in $\mathbb{Q}[t]$ of degree at most~$27$.
The characteristic polynomial $\chi_{M_5}$ of $M_5$ reads           
\begin{alignat*}1    
T^4 + 
	\frac{3 \cdot 5}{2^5} \, N_{3}(t) \, t^5 \,(3t-1)^{10} \,T^3 
+\frac{3^3 \cdot 5}{2^{10}}  \, N_{2}(t) \, (3t-1)^5 \,T^2 + 
	\frac{3^5 \cdot 5^2 \cdot 7}{2^7} \, N_{1}(t) \, T + 
	\frac{3^9 \cdot 5^3 \cdot 7^2}{2^3} \, N_{0}(t),
\end{alignat*}  
where $N_{0},N_{1},N_{2},N_{3}$ are irreducible polynomials in $\mathbb{Z}[t]$, of degree, respectively, $21, 26, 26, 21$ and with coefficients having at most 20 decimal digits.                               

\smallskip The polynomial $\chi_{M_5}$ obviously equals $T^4$ modulo $p=5$, so the $5$-curvature of $\mathcal{L}_{1,1}$ is nilpotent (but not zero\footnote{Modulo $5$, the curvature matrix $M_5(t)$ has $T^2$ as minimal polynomial.}) modulo $5$. In fact, for all the primes $7\leq p < 100$, the $p$-curvature matrix of $\mathcal{L}_{1,1}$ is also nilpotent modulo $p$; it is even zero modulo~$p$. Under the assumption that Grothendieck's conjecture is true, this indicates that~$\mathcal{L}_{1,1}$ admits a basis of algebraic solutions, and so provides independent evidence that also $S(t) = F(t;1,1)$ is algebraic. 
\end{ex}

 \section{Empirical Results in 2D}\label{sec:3}


In this section, we consider the total number of walks only, i.e., the generating function~$F(t;1,1)$. 
Because of symmetries, the 256 possible step sets give rise to 92 different
sequences only. By inspection of the first $N=1000$ terms, we found that 
36 of them appear to be D-finite: 19 are algebraic and 17 are transcendental. 
The D-finite step sets, together with the sizes of the equations we discovered,
are listed in Table~\ref{tab:2d} in the appendix.
(There, and below, step sets are represented by compact pictograms, e.g.  ${\stepset01010001}$ for $\mathfrak{S} =\{{\leftarrow,}\ {\nearrow,}\
{\downarrow}\}$.)




 \subsection{Combinatorial Observations}

 Our classification matches the results of Bousquet-M\'elou and 
 Mishna~\cite{BoMi08}: for every sequence they prove D-finite
 our software found a recurrence and a differential equation, and whenever
 a series is algebraic indeed, our programs recognized it. 
 Moreover, we found no recurrence or differential equation for any step
 set conjectured non-D-finite by Bousquet-M\'elou and Mishna. 
 This strengthens the evidence in favor of the conjectured non-D-finiteness
 of these cases. 

 \subsection{Algebraic Observations}

 All but two of the minimal polynomials of the algebraic series share the property that they define a curve of genus~0.
 As a consequence, there exists a rational parametrization in all these cases. 
 For example, for the Kreweras step set \smash{\stepset01010001}, the minimal polynomial $P_{1,1}$ given in~\eqref{eq:algeqKrew} 
 defines a curve parameterized by
 \[
  T(u) = \frac{(u^2+24 u+151) a(u)}
            {(u+9)(u^2+24u+147)}
    \quad\text{and}\quad
  t(u)  = \frac{2}{a(u)},
 \]
 where $a(u) = \bigl(u^6+66 u^5+1827 u^4+27180 u^3+229431 u^2+1042866 u+1995717\bigr)\big/(u+11)(u^2+22 u+125)^2$, i.e., for these rational functions we have
 \[
   P_{1,1}(T(u),t(u))=0.
 \]
 The two algebraic series that do not admit a rational parametrization belong to the step sets
 $\smash{\stepset10001010}$ (reverse Kreweras) and $\smash{\stepset11000011}$ (Gessel's). Their genus is~1.

 Another feature of the series which we found to be algebraic is that they all admit closed forms in terms of (nested) radical expressions. 
 For example, for the Kreweras step set, we find that $F(t;1,1)$ is equal to 
 \[
    -\frac{1}{t} + 
    \sqrt{\frac{\left(i-\sqrt{3}\right) \left(216 t^3+1\right)
    \left(t-3 t^2\right)^2-2 i t (36 t^2 -15 t+1) a(t)+\left(i+\sqrt{3}\right) a(t)^2}{6it^3 (3 t-1)^3 a(t)}}
 \]
 where $i=\sqrt{-1}$ and $\displaystyle
  a(t)=\sqrt[3]{24 \sqrt{3t^9 (3 t-1)^9 \left(9 t^2+3
    t+1\right)^3}-t^3 (3 t-1)^3 \left(5832 t^6+540 t^3-1\right)}$.
 Such representations can be found by appealing to the built-in equation solvers of Maple and Mathematica
 applied to the equation~$P_{1,1}=0$.
 Both features are remarkable because, among all algebraic power series, 
 only a few are rationally parameterizable or expressible
 in terms of radicals. 


 Also the transcendental D-finite series appear to have some 
special properties.
 Being D-finite, these series are annihilated by some linear differential operator 
 \[
  \mathcal{L} = c_0(t) + c_1(t)D_t + \cdots + c_r(t)D_t^r\in \set Q[t]\<D_t>.
 \]
According to the DFactor command from Maple's DEtools package, all the operators can be factorized into a  
product of one irreducible operator of order~2 and several operators of order~1. As all the operators are globally nilpotent,  
so are all their factors~\cite{Dwork90,DwGeSu94}.
                                                        
We can therefore expect that every solution of these factors can be  
written as a sum of terms of the form
 \begin{alignat}1 \label{eq:weakpullback}
	R(t)^{\delta} \cdot \twoFone{\alpha} {\beta}{\gamma}{Z(t)},
 \end{alignat} 
where $R$ and $Z$ are rational functions in $\set Q(t)$ and $\alpha,\beta, \gamma, \delta$ are rational numbers.
Indeed, Dwork~\cite[Item 7.4]{Dwork90} has conjectured that any  
globally nilpotent second order differential equation has either  
algebraic solutions or 
is gauge equivalent to a weak pullback of a Gauss hypergeometric differential equation with rational parameters.       
This conjecture was disproved by Krammer~\cite{Krammer96} and recently by Dettweiler and Reiter~\cite{DeRe08};
the counter-examples given in these papers require involved tools in  
algebraic geometry (arithmetic triangle groups, systems associated to periods of Shimura curves,  \ldots)

We are therefore in a win-win situation: either the second order operators  
appearing as factors of our operators 
admit only solutions 
which are indeed sums of 
terms of the form~\eqref{eq:weakpullback}, or there is a simple combinatorial counter-example  
to Dwork's conjecture.  Let us illustrate this on one of the most simple examples, the step set \smash{\stepset10100101}. 
 We find here the differential operator 
 \begin{alignat*}1
   & 4 (32 t^2-12 t-1)
   +4 (8 t-1) (20 t^2-3 t-1) D_t+t (4 t-1) (112 t^2-5) D_t^2
   +t^2 (4 t-1)^2 (4 t+1) D_t^3
 \end{alignat*}
 which Maple factors into 
 \[
  \bigl(2(192t^3-56t^2-6t+1)+4(24t^2-1)(4t-1)tD_t + (4t-1)^2(4t+1)t^2D_t^2\bigr)\bigl(1/t+D_t\bigr).
 \]
 With the help of Maple's built-in differential equation solver (the dsolve command), 
 it can be found that the differential operator gives rise to the representation 
 \[
   F(t;1,1)=-\frac1{4t}+\Bigl(1+\frac1{4t}\Bigr)\twoFone{1/2}{1/2}1{16t^2}.
 \]
 (Incidentally, this solution can also be expressed in terms of elliptic functions.)
 We believe that all the transcendental D-finite generating functions for any step set admit a representation as 
 (a nested integral of) such an expression. 
 The solvers of Maple and Mathematica, however, are able to discover such a representation only in the simplest cases. 
 (Note that at present, no complete algorithm is known that is capable of finding general pullback representations.)

 \subsection{Analytic Observations}

By Theorem~\ref{theo:Gseries}, all the coefficient sequences grow like~$\kappa n^\alpha\rho^n\log(n)^\beta$ for some constants $\kappa,\rho,\alpha,\beta$
(we only care about the dominant part of their asymptotic expansions).
 From the differential equation or the recurrence equation, we can determine $\rho$, $\alpha$, and $\beta$ exactly as roots of characteristic
 polynomials and indicial equations, respectively. (See~\cite{wimp85,flajolet08} on how this is done.)
 We find that $\beta=0$ in all cases. 
 Knowing the recurrence, we can also compute easily tens of thousands of sequence 
 terms. With the help of convergence acceleration techniques~\cite{brezinski78} applied to so many terms, it is possible to determine 
 the remaining constant~$\kappa$ to an accuracy of thirty digits or more.
 With that many digits, it makes sense to search systematically for potential exact 
 expressions of these constants using Plouffe's inverter~\cite{plouffe08} and/or 
 algorithms like LLL and PSLQ~\cite{bailey07}. We 
actually found ``closed form'' expressions 
 for all these constants. They are included in Table~\ref{tab:2d} in the appendix.

  By Theorem~\ref{theo:Radius}, the numbers $\rho$ are bounded by the cardinality of the step set~$\mathfrak{S}$.
  It turns out that $\rho=|\mathfrak{S}|$ unless the vector sum of the elements of the step set points 
  outside the first quadrant. In these cases, $\rho$~is an algebraic number of degree~2 (e.g., $\rho=1+2\sqrt2$ for the step  set \smash{\stepset00010111}). 
For~$\alpha$, we found only non-positive numbers. 
  Note that $\alpha$ being a negative integer implies that the corresponding 
  series is transcendental~\cite{flajolet87}.

  All the constants $\kappa$ have the form 
  $
    u \rho^{e_0}\phi_1^{e_1}\phi_2^{e_2}\cdots\phi_r^{e_r},
  $
  where the $\phi_i$ are usually small integers, the
  $e_i$ are rational numbers, and $u$ is $1/\pi$ if $F(t;1,1)$ is transcendental, and $1/\Gamma(\alpha+1)$ if $F(t;1,1)$ is algebraic.

  There are some cases where the $\phi_i$ are not integers. 
  Among them, very strange is only the case of the step set~
\smash{\stepset11110111},
for which we found $r=1$, $e_0=7/2, e_1=1/2$, $\rho=2+2\sqrt{6}$ and  
$\phi_1=(1137+468 \sqrt6)/152000$.
  This last number may look like a guessing artefact at first glance, but we trust in its correctness, because the number of correct digits exceeds by far the 
  number of correct digits to be expected from an artefact. 

 \section{Empirical Results in 3D}\label{sec:4}


We have investigated walks in three dimensions confined to the first octant with 
step sets of up to five elements. A priori, there are 83682 such step sets, and 
they give rise to 3334 different sequences. Of those, we have computed the first
$N=400$ terms 
of the generating function $G(t;1,1,1)$ of general walks,
and searched for potential differential equations, algebraic 
equations, and recurrence equations. We found that 134 sequences appear to be 
D-finite, and among those, 50~appear to be algebraic. 





 \subsection{Combinatorial Observations}

For some of the sequences, it can be realized that their D-finiteness or 
algebraicity is a consequence of the D-finiteness or algebraicity of a certain
2D walk. For example, the sequence corresponding to the step set
\begin{alignat*}1
 \sequenceThreeD{1485}{\Stepset00000000000011010000010010}{1, 4, 17, 75, 339, 1558, 7247, 34016, 160795, 764388}{A026378}
\end{alignat*}
is readily seen to be D-finite, since it may be regarded as a variation of the 
2D step set $\smash{\stepset00011010}$ in which the step $\uparrow$ appears in two copies
and empty steps are allowed. 
(Here and below, a three dimensional step set is depicted in three separate 
slices: first the arrows tops of the forms $(x,y,-1)$, then $(x,y,0)$, 
then $(x,y,1)$. For example, the step set above is
$\{(-1,0,0),\ (0,1,0),\ (1,0,0),\ (0,0,1),\ (0,1,1)\}$. The given numbers are the first coefficients
in the expansion of~$G(t;1,1,1)$.)

Discarding these cases from consideration, we are left with 35 different 
sequences whose generating series appear to be D-finite; among those, three appear to be
algebraic. Their step sets are given in the appendix. 

 We were not able to find an equation for the step set
 \[
  \sequenceThreeD{1308}{\Stepset00100010000000000100000001}{1, 1, 4, 7, 28, 70, 280, 787, 3148, 9526, 38104}{A149080}
 \]
 which is symmetric about all three axes, not even with 800 terms instead of~400. 
 Also the step set
 \[
  \sequenceThreeD{1357}{\Stepset00001000001010000000000001}{1, 1, 4, 13, 40, 136, 496, 1753, 6256, 22912, 85216}{A149424}
 \]
 which enjoys a rotational symmetry about the middle line of the first octant,
 and which may be viewed as a three dimensional analogue of Kreweras's step
 set, appears to be non-D-finite, even when 800 terms are taken into account.

 For walks in the quarter plane, 
it is conjectured in~\cite[Section~3]{Mishna07} that 
D-finiteness is preserved under reversing arrows, i.e., the generating function for a step set $\mathfrak{S}$ 
 is D-finite if and only if the generating function for the step set $\mathfrak{S}'$ is, when $\mathfrak{S}'$ is obtained from $\mathfrak{S}$
 by reversing all arrows. 
Our computations do not suggest that this criterion also applies in~3D. Among the 134 sequences we found D-finite, there are 42 which correspond to step 
 sets in $\mathfrak{S}$ for whose counterpart in $\mathfrak{S}'$ we were not able to find an equation. 
 Among those, there are some which satisfy only very large equations, so that chances are that they
 remain D-finite upon reversing arrows, but with equations which are too large for us to find. 
 Others satisfy quite small equations, for example the sequence A026378 whose step set is given
 above. 


 \subsection{Algebraic Observations}\label{sec:4.2} 

 As in the 2D case, it turns out that most of the minimal polynomials of the
 algebraic series define curves of genus~0, which therefore can be rationally
 parameterized.  There are twelve cases of genus~1, these are elliptic curves.
 Some of them turn out to be isomorphic (over $\overline{\mathbb{Q}}$). 
For example, those corresponding to the cases
 \begin{alignat*}1
  \sequenceThreeD{1257}{\Stepset00000000001010001000100000}{1, 1, 4, 11, 32, 110, 360, 1163,4112,14066,47848}{A149232}\\
  \sequenceThreeD{1602}{\Stepset00000000001010001000110000}{1, 2, 7, 27, 105, 426, 1787, 7590,32633,142152,624659}{A150591}\\
  \sequenceThreeD{2843}{\Stepset00000000001010001000100001}{1, 2, 10, 40, 176, 808, 3720, 17152,81440,384448}{A151023}
 \end{alignat*}
 all have 1728 as $j$-invariant. They originate from the 2D Kreweras walks. 
 Most interestingly, there are also three step sets originating from the 2D reverse
 Kreweras walks (\smash{\stepset10001010}) for which the genus is 5~(!). 


 For the transcendental series, we could observe the same phenomenon as in~2D:
 all the operators factor as a product of a single irreducible operator of order
 two and several operators of order one. We therefore expect again that all
 these series admit a representation as a hypergeometric pullback. As an example,
 the generating function $G(t;1,1,1)$ of the sequence 
 \[
   \sequenceThreeD{108}{\Stepset00000100000010010010000000}{
     1, 1, 2, 4, 10, 25, 70, 196, 588, 1764
    }{A005817}
 \]
 can be written in the form 
 \begin{alignat*}1
 \frac{4t^2-2t+1}{48t^3}\int_0^t &x^{-2}(4x^2-2x+1)^{-2}
   \Bigl((1-16x^2)(48x^3-12x^2-8x-1)\twoFone{1/2}{1/2}{1}{16x^2}\\
   &\qquad\qquad-(112x^3-4x^2-8x-1)\twoFone{-1/2}{1/2}{1}{16x^2}\Bigr)dx.
 \end{alignat*}
 This representation was found by Mark van Hoeij. It is beyond the scope of the
 standard tools of Maple or Mathematica.

 \subsection{Analytic Observations} 

 Also concerning asymptotics, similar remarks apply as in~2D. All coefficient sequences grow
 like $\kappa n^\alpha\rho^n$ for some constants $\kappa,\alpha,\rho$, where
 $\rho$ is an integer or an algebraic number of degree~2 and $\alpha$ is a
 non-positive number.
 We have not gone through the laborious task of determining
 the constants~$\kappa$.

\bigskip\noindent\textsf{\itshape Acknowledgments.} 
We are grateful to the anonymous
referees for their pertinent suggestions which helped to improve the presentation of this paper.     
We thank Pierre Nicod\`eme and Bruno Salvy, who carefully read a preliminary version and made several useful comments. We also thank Mark van Hoeij for computing the example pullback in 3D given in Section~\ref{sec:4.2}.

 \bibliographystyle{plain}
 \makeatletter
 \def\@openbib@code{\itemsep=-3pt}
 \makeatother
 \bibliography{main}

 \clearpage
 \section*{Appendix}

\normalsize
\mytable\label{tab:2d} D-finite series and their step sets in~2D.
The equation sizes columns refer to (minimal) recurrence equation, differential equation, and algebraic
equation, respectively. Example: The series $F(t;1,1)$ for Kreweras walks (A151265) satisfies
a differential equation of order~4 with polynomial coefficients of degree~9 and an algebraic
equation $P(F(t;1,1),t)=0$ for a polynomial $P(T,t)$ of degree~6     
in~$T$
and~8 in~$t$.
The coefficient sequence of $F(t;1,1)$ satisfies a recurrence
equation of order~6 with polynomial coefficients of degree~4.  
The labels used in the columns ``OEIS Tag'' are taken from Sloane's On-Line Encyclopedia of 
Integer Sequences \url{http://www.research.att.com/~njas/sequences/}. 
Constants in the asymptotics columns are abbreviated 
$A=1+\sqrt{2}, B=1+2 \sqrt{2}, C=1+\sqrt{3}, D=1+2 \sqrt{3}, E=\sqrt{6(379+156\sqrt6)}\ (!), F=1+\sqrt{6}$.

\medskip

\def\itemA#1#2#3#4#5#6#7#8{#2 & #3 & #5 & #6 & #7 &\rule[-.5em]{0pt}{2.35em}\kern-1.2pt$#8$\kern-1.2pt&\global\let\item\itemB}
\def\itemB#1#2#3#4#5#6#7#8{#2 & #3 & #5 & #6 & #7 &\kern-1.2pt$#8$\kern-1.2pt\\\global\let\item\itemA}
\let\item\itemA
\footnotesize
\begin{longtable}{@{}c|c|c|c|c|c||c|c|c|c|c|c@{}}
 OEIS Tag & Steps & \multicolumn{3}{|c|}{Equation sizes} & Asymptotics & OEIS Tag & Steps & \multicolumn{3}{|c|}{Equation sizes} & Asymptotics \\\hline \endhead
 \item{1}{A000012}{\stepset00000001}{1, 1, 1, 1, 1, 1, 1, 1}{1, 0}{1, 1}{1, 1}{1}%
 \item{2}{A000079}{\stepset00000011}{1, 2, 4, 8, 16, 32, 64, 128}{1, 0}{1, 1}{1, 1}{2^n}%
 \item{2}{A001405}{\stepset00000101}{1, 1, 2, 3, 6, 10, 20, 35}{2, 1}{2, 3}{2, 2}{\dfrac{\sqrt{2}}{\Gamma (\frac{1}{2})}\dfrac{2^n}{\sqrt{n}}}%
 \item{3}{A000244}{\stepset00001011}{1, 3, 9, 27, 81, 243, 729, 2187}{1, 0}{1, 1}{1, 1}{3^n}%
 \item{3}{A001006}{\stepset00110010}{1, 1, 2, 4, 9, 21, 51, 127}{2, 1}{2, 3}{2, 2}{\dfrac{3 \sqrt{3}}{2\Gamma (\frac{1}{2})}\dfrac{3^n}{n^{3/2}}}%
 \item{3}{A005773}{\stepset00000111}{1, 2, 5, 13, 35, 96, 267, 750}{2, 1}{2, 3}{2, 2}{\dfrac{\sqrt{3}}{\Gamma (\frac{1}{2})}\dfrac{3^n}{\sqrt{n}}}%
 \item{3}{A126087}{\stepset00010101}{1, 1, 3, 5, 15, 29, 87, 181}{3, 1}{2, 5}{2, 2}{\dfrac{12 \sqrt{2}}{\Gamma (\frac{1}{2})}\dfrac{2^{3 n/2}}{n^{3/2}}}%
 \item{3}{A151255}{\stepset10001100}{1, 1, 2, 3, 8, 15, 39, 77}{6, 8}{4, 16}{--}{\dfrac{24 \sqrt{2}}{\pi }\dfrac{2^{3 n/2}}{n^2}}%
 \item{3}{A151265}{\stepset01010001}{1, 1, 3, 7, 17, 47, 125, 333}{6, 4}{4, 9}{6, 8}{\dfrac{2 \sqrt{2}}{\Gamma (\frac{1}{4})}\dfrac{3^n}{n^{3/4}}}%
 \item{3}{A151266}{\stepset00110001}{1, 1, 3, 7, 19, 49, 139, 379}{7, 10}{5, 16}{--}{\dfrac{\sqrt{3}}{2\Gamma (\frac{1}{2})}\dfrac{3^n}{\sqrt{n}}}%
 \item{3}{A151278}{\stepset10001010}{1, 2, 4, 10, 26, 66, 178, 488}{7, 4}{4, 12}{6, 8}{\dfrac{3 \sqrt{3}}{\sqrt{2}\Gamma (\frac{1}{4})}\dfrac{3^n}{n^{3/4}}}%
 \item{3}{A151281}{\stepset00001101}{1, 2, 6, 16, 48, 136, 408, 1184}{3, 1}{2, 5}{2, 2}{\dfrac{1}{2}3^n}%
 \item{4}{A005558}{\stepset00111100}{1, 1, 3, 6, 20, 50, 175, 490}{2, 3}{3, 5}{--}{\dfrac{8}{\pi }\dfrac{4^n}{n^2}}%
 \item{4}{A005566}{\stepset01011010}{1, 2, 6, 18, 60, 200, 700, 2450}{2, 2}{3, 4}{--}{\dfrac{4}{\pi }\dfrac{4^n}{n}}%
 \item{4}{A018224}{\stepset10100101}{1, 1, 4, 9, 36, 100, 400, 1225}{2, 3}{3, 5}{--}{\dfrac{2}{\pi }\dfrac{4^n}{n}}%
 \item{4}{A060899}{\stepset00011101}{1, 2, 8, 24, 96, 320, 1280, 4480}{2, 1}{2, 3}{2, 2}{\dfrac{\sqrt{2}}{\Gamma (\frac{1}{2})}\dfrac{4^n}{\sqrt{n}}}%
 \item{4}{A060900}{\stepset10011001}{1, 2, 7, 21, 78, 260, 988, 3458}{2, 3}{3, 5}{8, 9}{\dfrac{4 \sqrt{3}}{3\Gamma (\frac{1}{3})}\dfrac{4^n}{n^{2/3}}}%
 \item{4}{A128386}{\stepset10010101}{1, 1, 4, 7, 28, 58, 232, 523}{3, 1}{2, 5}{2, 2}{\dfrac{6 \sqrt{2}}{\Gamma (\frac{1}{2})}\dfrac{2^n 3^{n/2}}{n^{3/2}}}%
 \item{4}{A129637}{\stepset00001111}{1, 3, 11, 41, 157, 607, 2367, 9277}{3, 1}{2, 5}{2, 2}{\dfrac{1}{2}4^n}%
 \item{4}{A151261}{\stepset10011100}{1, 1, 3, 5, 17, 34, 121, 265}{5, 8}{4, 15}{--}{\dfrac{12 \sqrt{3}}{\pi }\dfrac{2^n 3^{n/2}}{n^2}}%
 \item{4}{A151282}{\stepset00010111}{1, 2, 6, 18, 58, 190, 638, 2170}{3, 1}{2, 5}{2, 2}{\dfrac{A^2 B^{3/2}}{2^{3/4}\Gamma (\frac{1}{2})}\dfrac{B^n}{n^{3/2}}}%
 \item{4}{A151291}{\stepset00111001}{1, 2, 7, 23, 84, 301, 1127, 4186}{6, 10}{5, 15}{--}{\dfrac{4}{3\Gamma (\frac{1}{2})}\dfrac{4^n}{\sqrt{n}}}%
 \item{5}{A151275}{\stepset10110101}{1, 1, 5, 13, 61, 199, 939, 3389}{9, 18}{5, 24}{--}{\dfrac{12 \sqrt{30}}{\pi }\dfrac{(\sqrt{24})^n}{n^2}}%
 \item{5}{A151287}{\stepset10111010}{1, 2, 6, 21, 76, 290, 1148, 4627}{7, 11}{5, 19}{--}{\dfrac{\sqrt{8} A^{7/2}}{\pi }\dfrac{(2A)^n}{n^2}}%
 \item{5}{A151292}{\stepset10010111}{1, 2, 7, 23, 85, 314, 1207, 4682}{3, 1}{2, 5}{2, 2}{\dfrac{\sqrt[4]{3} C^2 D^{3/2}}{8\Gamma (\frac{1}{2})}\dfrac{D^n}{n^{3/2}}}%
 \item{5}{A151302}{\stepset10100111}{1, 2, 8, 29, 129, 535, 2467, 10844}{9, 18}{5, 24}{--}{\dfrac{\sqrt{5}}{3 \sqrt{2}\Gamma (\frac{1}{2})}\dfrac{5^n}{\sqrt{n}}}%
 \item{5}{A151307}{\stepset01011101}{1, 2, 9, 34, 151, 659, 2999, 13714}{8, 15}{5, 20}{--}{\dfrac{\sqrt{5}}{2 \sqrt{2}\Gamma (\frac{1}{2})}\dfrac{5^n}{\sqrt{n}}}%
 \item{5}{A151318}{\stepset00011111}{1, 3, 13, 55, 249, 1131, 5253, 24543}{2, 1}{2, 3}{2, 2}{\dfrac{\sqrt{5/2}}{\Gamma (\frac{1}{2})}\dfrac{5^n}{\sqrt{n}}}%
 \item{6}{A129400}{\stepset01111110}{1, 2, 8, 32, 144, 672, 3264, 16256}{2, 1}{2, 3}{2, 2}{\dfrac{3 \sqrt{3}}{2\Gamma (\frac{1}{2})}\dfrac{6^n}{n^{3/2}}}%
 \item{6}{A151297}{\stepset11011110}{1, 2, 7, 26, 105, 444, 1944, 8728}{7, 11}{5, 18}{--}{\dfrac{\sqrt{3} C^{7/2}}{2\pi } \dfrac{(2 C)^n}{n^2}}%
 \item{6}{A151312}{\stepset10111101}{1, 2, 10, 39, 210, 960, 5340, 26250}{4, 5}{3, 8}{--}{\dfrac{\sqrt{6}}{\pi }\dfrac{6^n}{n}}%
 \item{6}{A151323}{\stepset11011011}{1, 3, 14, 67, 342, 1790, 9580, 52035}{2, 1}{2, 3}{4, 4}{\dfrac{\sqrt{2}\, 3^{3/4}}{\Gamma (\frac{1}{4})}\dfrac{6^n}{n^{3/4}}}%
 \item{6}{A151326}{\stepset01011111}{1, 3, 15, 74, 392, 2116, 11652, 64967}{7, 14}{5, 18}{--}{\dfrac{2 \sqrt{3}}{3\Gamma (\frac{1}{2})}\dfrac{6^n}{\sqrt{n}}}%
 \item{7}{A151314}{\stepset11110111}{1, 2, 11, 49, 277, 1479, 8679, 49974}{9, 18}{5, 24}{--}{\dfrac{EF^{7/2}}{5\sqrt{95}\pi } \dfrac{(2F)^n}{n^2}}%
 \item{7}{A151329}{\stepset10111111}{1, 3, 16, 86, 509, 3065, 19088, 120401}{9, 18}{5, 24}{--}{\dfrac{\sqrt{7/3}}{3\Gamma (\frac{1}{2})}\dfrac{7^n}{\sqrt{n}}}%
\global\let\\\empty
 \item{8}{A151331}{\stepset11111111}{1, 3, 18, 105, 684, 4550, 31340, 219555}{3, 4}{3, 6}{--}{\dfrac{8}{3\pi }\dfrac{8^n}{n}}%
\end{longtable}%

\bigskip

\normalsize
\mytable\label{tab:3dalg} Conjecturally algebraic series and their step sets in~3D.
Step set figures are as in Section~\ref{sec:4}. Equation sizes are as in Table~\ref{tab:2d}.

\medskip

\let\stepset\Stepset

\footnotesize
\def\item#1#2#3#4#5#6#7#8{#3, \dots\quad(#2)&#4&#5&#6&#7&#8\\}%
\begin{longtable}{l|c|c|c|c|c}%
 First terms\quad(OEIS Tag) & \multicolumn{2}{|c}{Step sets} & \multicolumn{3}{|c}{Equation sizes} \\\hline \endhead
 \item{7}{A025237}{1, 1, 4, 10, 37, 121, 451, 1639}{\stepset00001000000000001100100100}{\stepset00000001000000001100100100}{2, 1}{2, 3}{2, 2}%
 \item{7}{A149576}{1, 1, 5, 15, 51, 199, 755, 2789}{\stepset00001000010010100000000001}{\stepset00000001010010100000000001}{11, 22}{7, 31}{12, 17}%
 \item{7}{A149847}{1, 2, 4, 14, 46, 134, 502, 1820}{\stepset10010010000000001000010000}{\stepset10010010000000001000000010}{8, 6}{4, 16}{6, 9}%
\end{longtable}%

\bigskip

\normalsize
\mytable\label{tab:3dtrans} Conjecturally transcendental D-finite generating series and their step sets in~3D. 
The equation sizes columns refer to (minimal) recurrence equations, and differential equations, respectively.
\medskip

\def\itemA#1#2#3#4#5#6#7#8{#2&#4&#6&#7&\global\let\item\itemB}%
\def\itemB#1#2#3#4#5#6#7#8{#2&#4&#6&#7\\\global\let\item\itemA}%
\let\item\itemA
\footnotesize
\begin{longtable}{c|c|c|c||c|c|c|c}%
 OEIS Tag & Step sets & \multicolumn{2}{|c||}{Equation sizes} & OEIS Tag & Step sets & \multicolumn{2}{|c}{Equation sizes} \\\hline 
 \item{4}{A148060}{1, 1, 2, 3, 12, 25, 77, 161}{\stepset10010010000000001100000000}{}{9, 17}{5, 28}{--}   
   \item{3}{A148438}{1, 1, 2, 6, 15, 43, 143, 437}{\stepset00000000110010100000010000}{}{7, 10}{5, 17}{--}%
 \item{}{}{}{\stepset10010010000000001000100000}{}{}{}{}%
   \item{}{}{}{\stepset00000100010010100000000010}{}{}{}{}%
 \item{}{}{}{\stepset10010010100000000000010000}{}{}{}{}%
   \item{}{}{}{\stepset00000000110010100000000010}{}{}{}{}%
 \item{}{}{}{\stepset10010010100000000000000010}{}{}{}{}%
   \item{3}{A149090}{1, 1, 4, 7, 34, 73, 349, 817}{\stepset10000000000000001100100100}{}{9, 17}{5, 28}{--}%
 \item{2}{A149589}{1, 1, 5, 15, 57, 205, 809, 3119}{\stepset00001000000000000100100101}{}{10, 21}{6, 29}{--}%
   \item{}{}{}{\stepset00110100100000000000000010}{}{}{}{}%
 \item{}{}{}{\stepset00000001000000000100100101}{}{}{}{}%
   \item{}{}{}{\stepset00000010000000001100100100}{}{}{}{}%
 \item{1}{A005817}{1, 1, 2, 4, 10, 25, 70, 196}{\stepset00000100000010010010000000}{}{2, 2}{3, 4}{--}%
 \item{1}{A148005}{1, 1, 2, 3, 8, 15, 44, 91}{\stepset00000010010001000000100000}{}{5, 8}{4, 15}{--}%
 \item{1}{A148052}{1, 1, 2, 3, 10, 20, 63, 133}{\stepset00000010010001100100000000}{}{7, 18}{6, 27}{--}%
 \item{1}{A148068}{1, 1, 2, 3, 12, 25, 87, 189}{\stepset10000010000001000100000100}{}{7, 17}{6, 25}{--}%
 \item{1}{A148072}{1, 1, 2, 4, 9, 21, 56, 148}{\stepset00000100010000100000010000}{}{12, 57}{10, 69}{--}%
 \item{1}{A148162}{1, 1, 2, 4, 11, 31, 91, 267}{\stepset00001000000001000100000100}{}{4, 3}{3, 6}{--}%
 \item{1}{A148284}{1, 1, 2, 5, 12, 32, 97, 282}{\stepset00000100010010100000010000}{}{14, 57}{10, 71}{--}%
 \item{1}{A148331}{1, 1, 2, 5, 14, 42, 137, 464}{\stepset00000010110100000000010000}{}{11, 43}{9, 53}{--}%
 \item{1}{A148507}{1, 1, 3, 5, 17, 34, 126, 279}{\stepset00000010010011000000100000}{}{4, 6}{4, 11}{--}%
 \item{1}{A148525}{1, 1, 3, 5, 19, 39, 155, 349}{\stepset00010000010001100000100000}{}{7, 16}{6, 25}{--}%
 \item{1}{A148548}{1, 1, 3, 5, 21, 44, 179, 405}{\stepset10000000010001100000000100}{}{7, 19}{6, 28}{--}%
 \item{1}{A148689}{1, 1, 3, 7, 23, 64, 223, 687}{\stepset00000100010000100000001000}{}{8, 25}{8, 31}{--}%
 \item{1}{A148703}{1, 1, 3, 7, 23, 71, 246, 848}{\stepset00001000000001000100100100}{}{4, 3}{3, 6}{--}%
 \item{1}{A148790}{1, 1, 3, 8, 25, 77, 257, 853}{\stepset00000000100110000000001000}{}{6, 12}{5, 18}{--}%
 \item{1}{A148934}{1, 1, 3, 9, 28, 100, 365, 1365}{\stepset00000001010100000000101000}{}{5, 5}{4, 11}{--}%
 \item{1}{A149279}{1, 1, 4, 11, 44, 133, 585, 2067}{\stepset00000100010010100000001000}{}{14, 62}{10, 75}{--}%
 \item{1}{A149290}{1, 1, 4, 11, 45, 166, 690, 2855}{\stepset00001000000000101101000000}{}{11, 53}{9, 61}{--}%
 \item{1}{A149363}{1, 1, 4, 12, 44, 160, 635, 2520}{\stepset00000000100110001001000000}{}{7, 16}{6, 24}{--}%
 \item{1}{A149632}{1, 1, 5, 15, 69, 217, 1061, 3923}{\stepset00000000110010100000000001}{}{7, 11}{5, 16}{--}%
 \item{1}{A149713}{1, 1, 5, 17, 71, 289, 1269, 5529}{\stepset00001000000000000101000101}{}{8, 22}{7, 29}{--}%
 \item{1}{A150054}{1, 2, 6, 18, 62, 215, 809, 3045}{\stepset00001000010001100000010000}{}{12, 39}{9, 52}{--}%
 \item{1}{A150370}{1, 2, 7, 23, 94, 366, 1572, 6510}{\stepset00000100010001100000001000}{}{14, 62}{10, 75}{--}%
 \item{1}{A150410}{1, 2, 7, 24, 94, 370, 1537, 6440}{\stepset00000000100111000000001000}{}{4, 6}{4, 11}{--}%
 \item{1}{A150471}{1, 2, 7, 25, 99, 402, 1687, 7242}{\stepset00001000001010001000010000}{}{12, 33}{8, 42}{--}%
 \item{1}{A150499}{1, 2, 7, 25, 101, 414, 1773, 7680}{\stepset00000001001011000000000010}{}{14, 48}{9, 61}{--}%
 \item{1}{A150764}{1, 2, 8, 30, 126, 530, 2330, 10290}{\stepset00000100000110001000001000}{}{7, 13}{6, 19}{--}%
 \item{1}{A150950}{1, 2, 9, 35, 155, 677, 3095, 14118}{\stepset00001000000000000100101001}{}{8, 23}{7, 29}{--}%
 \global\let\\\empty
 \item{1}{A151053}{1, 3, 10, 37, 144, 586, 2454, 10491}{\stepset00001000010001010000010000}{}{14, 38}{9, 48}{--}%
\end{longtable}%

\end{document}